\newtheorem{theorem}{Theorem}[section]
\newtheorem{lemma}[theorem]{Lemma}
\newtheorem{definition}[theorem]{Definition}
\newtheorem{corollary}[theorem]{Corollary}
\newcommand{\eps}{\varepsilon}
\newcommand{\F}{\mathcal{F}}
\newcommand{\C}{\mathcal{C}}
\DeclareMathAccent{\widehat}{\mathord}{largesymbols}{"62}
\title{Set System Blowups}
\author{
Ryan Alweiss \thanks{Research supported by an NSF Mathematical Sciences Postdoctoral Fellowship.} \\
Department of Mathematics\\
University of Cambridge\\
\texttt{ra699@cam.ac.uk}
}
\begin{document}
\maketitle

\begin{abstract}
	We prove that given a constant $k \ge 2$ and a large set system $\F$ of sets of size at most $w$, a typical $k$-tuple of sets $(S_1, \cdots, S_k)$ from $\F$ can be “blown up” in the following sense: for each $1 \le i \le k$, we can find a large subfamily $\F_i$ containing $S_i$ so that for $i \neq j$, if $T_i \in \F_i$ and $T_j \in \F_j$ , then $T_i \cap T_j=S_i \cap S_j$.  We also show that the answer to the multicolor version of the sunflower conjecture is the same as the answer for the original, up to an exponential factor.

\end{abstract}

\section{Introduction}
\label{s1}
\subsection{Background and Main Result}
In 2019, the author, Lovett, Wu, and Zhang improved the best known bounds for the sunflower lemma \cite{alwz}.  Central to this result was an ``encoding" argument, in which the ground set $X$ of a set system $\F$ is colored in some way. Most pairs $(S,\C)$ of a set $S \in \F$ and a coloring $\C$ of the ground set are then shown to have some property, by the construction of an explicit injection of the ``bad" pairs that do not have the property into some small collection.
In this note, we use this encoding idea to prove a rather surprising fact about set systems. A \emph{$w$-set system} is a family of sets, each of size at most $w$. Throughout this part, let $\F$ be a $w$-set system on a ground set $X$, and let $k \ge 2$ be some fixed integer. We introduce sunflowers and describe them in detail in~\Cref{sfs}. \begin{definition}
Call a $k$-tuple of sets $(S_1, \cdots, S_k)$ from $\F$ $n$-\emph{inflatable} if for each $1 \le i \le k$ there exists a subfamily $\F_i \ni S_i$ of $\F$ so that $|\F_i| \ge |\F|/n$, and so that for any $T_i \in \F_i$ and $T_j \in \F_j$ with $i \neq j$, we have $T_i \cap T_j = S_i \cap S_j$.  
\end{definition}
We show, that in a large set system almost all $k$-tuples are inflatable, where the corresponding families $\F_i$ are very large.

\begin{theorem}
\label{thm:main}
Let $k \ge 2$ be fixed, and let $\F$ be a $w$-set system on some ground set $X$.  For all choices of $n$, all but at most $\dbinom{k+w-1}{k-1}\frac{k^{w+1}2^{w(k-1)}}{n}|\F|^k$ of the $|\F|^k$ tuples $(S_1, \cdots, S_k) \in \F^k$ are $n$-inflatable.
\end{theorem}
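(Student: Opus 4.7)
My plan is to construct the candidate subfamilies $\F_i$ from a coloring of the ground set, so the intersection property becomes automatic, and then bound the tuples for which no good coloring exists by a double-count plus an encoding argument in the spirit of \cite{alwz}.

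For a tuple $(S_1,\ldots,S_k)$ and a coloring $c\colon X\to[k]$, let $U:=S_1\cup\cdots\cup S_k$ and set
\[
\F_i^c := \{T\in\F : T\cap U=S_i \text{ and } T\setminus U\subseteq c^{-1}(i)\}.
\]
Then $S_i\in\F_i^c$, and for $T_i\in\F_i^c, T_j\in\F_j^c$ with $i\neq j$ the intersection $T_i\cap T_j$ decomposes as $S_i\cap S_j$ on $U$ (from $T_\ell\cap U=S_\ell$) together with the empty set on $X\setminus U$ (from $c^{-1}(i)\cap c^{-1}(j)=\emptyset$). So whenever some coloring $c$ makes $|\F_i^c|\ge|\F|/n$ for every $i$, the tuple is $n$-inflatable with witnesses $\F_i:=\F_i^c$.

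Let $B$ be the set of bad tuples. For each $S\in B$ and each coloring $c$, some $|\F_i^c(S)|<|\F|/n$; summing over $c\in[k]^X$ and union-bounding over $i$ via coordinate symmetry gives
\[
|B|\cdot k^{|X|} \;\le\; k\cdot\#\{(S,c) : |\F_1^c(S)|<|\F|/n\}.
\]
To bound the right-hand count I would count triples $(S,c,T_1)$ with $|\F_1^c(S)|<|\F|/n$ and $T_1\in\F_1^c(S)$, via the injective map $(S,c,T_1)\mapsto((T_1,S_2,\ldots,S_k),c,\mathrm{aux})$, where the auxiliary data encodes how $S_1$ sits inside $T_1$ relative to $S_2,\ldots,S_k$. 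Injectivity follows from $T_1\cap U=S_1$: the labels on $T_1$ pin down $S_1$, and hence $(S,c)$. The hypothesis $|\F_1^c|<|\F|/n$ restricts, for each reconstructed image, the number of valid $T_1$ to fewer than $|\F|/n$ out of $|\F|$ choices, producing the $1/n$ saving. Combined with an auxiliary count of at most $\binom{k+w-1}{k-1}\cdot 2^{w(k-1)}$ and the trivial inequality $\#\text{triples}\ge\#\text{bad pairs}$, this yields the bound $|B|\le\binom{k+w-1}{k-1}\cdot\frac{k\cdot 2^{w(k-1)}}{n}\cdot|\F|^k$.

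The principal obstacle is designing the auxiliary data so that it injectively determines $S_1$ and is counted sharply by $\binom{k+w-1}{k-1}\cdot 2^{w(k-1)}$. A naive element-by-element labeling has roughly $(1+2^{k-1})^w$ types and is too loose, so the technical crux is to group the $\le w$ elements of $T_1$ into $k$ type classes (dictated by the constraint $T_1\cap S_j\subseteq S_1$ for $j\ge 2$), giving $\binom{k+w-1}{k-1}$ weak-composition profiles, and then to record each $S_1$-element's $(k-1)$-bit membership pattern in the remaining $S_j$'s, contributing at most $2^{w(k-1)}$ labelings.
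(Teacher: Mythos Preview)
Your setup via colorings is on the right track and parallels the paper's approach, but the encoding step has a genuine gap that loses the crucial $1/n$ factor.

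The problem is in the sentence ``The hypothesis $|\F_1^c|<|\F|/n$ restricts, for each reconstructed image, the number of valid $T_1$ to fewer than $|\F|/n$.'' Your injection sends $(S,c,T_1)\mapsto((T_1,S_2,\ldots,S_k),c,\mathrm{aux})$, and the auxiliary data recovers $S_1$ \emph{from $T_1$}. Hence for a fixed $(S_2,\ldots,S_k,c,\mathrm{aux})$, different choices of $T_1$ generally yield different $S_1$'s, and therefore different families $\F_1^c(S)$; the bound $|\F_1^c(S)|<|\F|/n$ says nothing about how many $T_1$'s can occupy the first slot. All the injection gives is $\#\text{triples}\le |\F|^k\cdot k^{|X|}\cdot|\mathrm{Aux}|$, and combined with $\#\text{bad pairs}\le\#\text{triples}$ you end up with $|B|\le k\cdot|\mathrm{Aux}|\cdot|\F|^k$ and no $1/n$ at all.

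The paper's encoding avoids this by arranging that the candidate family containing $S_j$ is determinable \emph{without} knowing $S_j$. It defines $T$ to ``mimic'' $S_j$ when $T\cap S_i=S_i\cap S_j$ for all $i\neq j$ and $T\setminus S_j\subset X_j$, and records the recoloring $\C'$ obtained by repainting all of $S_j$ with color $j$. Under $\C'$ the mimicking sets are exactly $\{T\in\F: T\subset X_j',\ T\cap S_i = S_i\cap S_j\ \forall i\neq j\}$, a family computable from $\C',(S_i)_{i\neq j},(S_i\cap S_j)_{i\neq j}$ alone; an index of size at most $|\F|/n$ then pins down $S_j$, and that is where the $1/n$ legitimately enters. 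Your family $\F_1^c$ depends on $U=S_1\cup\cdots\cup S_k$ and on the containment $S_1\subseteq T$, both of which involve $S_1$, so no analogous ``recover the family first, then index into it'' step is available. A secondary point: the factor $\binom{k+w-1}{k-1}$ in the statement arises in the paper from counting the color profile of $S_j$ under a \emph{balanced} coloring together with a log-convexity bound on multinomial coefficients; with unrestricted colorings $c:X\to[k]$ and your described auxiliary data there is no mechanism producing $\binom{k+w-1}{k-1}$ rather than, say, $k^w$.
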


It would be quite interesting to reproduce any of the results proved here in some other way, and perhaps this would shed some light on the sunflower conjecture. 

\subsection{Proof Outline}

We prove \Cref{thm:main} by introducing the concept of ``mimicking" sets, which allows us to capture inflatability through looking at colorings.  Given a $k$-tuple $(S_1, \cdots, S_k)$ from $\F$ and a coloring $\C$ on $X$, we say that $T$ ``mimics" $S_j$ if it satisfies certain natural coloring conditions, and its intersections with the $S_i$ are the same as those of $S_j$. If enough sets mimic each $S_j$, we will have the desired inflatability property.  We can use an encoding argument to count the number of ``bad" $k$-tuples $(S_1, \cdots, S_k)$ so that some $S_j$ does not have too many sets $T$ mimicking it.
	
\subsection{Blowing up Pairs of Sets}

For $k=2$, \Cref{thm:main} yields the following.

\begin{theorem}[\Cref{thm:main}, $k=2$]
\label{thm:2set} If $\F$ is a $w$-set system, then all but at most $\frac{4^w(2w+2)}{n}|\F|^2$ pairs $(S, T)$ of sets from $\F$ are $n$-inflatable. \end{theorem}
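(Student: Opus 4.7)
Theorem~\ref{thm:2set} is the $k=2$ specialization of \Cref{thm:main}: substituting $k=2$ gives the constant $\binom{w+1}{1}\cdot\frac{2\cdot 2^{w}}{n}=\frac{2^{w}(2w+2)}{n}$, matching exactly. So formally the statement is a direct corollary. Since the $k=2$ case cleanly exhibits the mimicking mechanism in its simplest form, let me sketch the self-contained argument I would give.

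The plan is to produce, for most pairs $(S,T)\in\F^{2}$, a 2-coloring $\C:X\to\{1,2\}$ under which the mimicking subfamilies
\begin{align*}
\F_1 &=\{U\in\F:U\cap T=S\cap T,\ \C(U\setminus T)\subseteq\{1\}\},\\
\F_2 &=\{U\in\F:U\cap S=T\cap S,\ \C(U\setminus S)\subseteq\{2\}\}
\end{align*}
contain $S,T$ respectively and each have size $\ge|\F|/n$. Inflatability then follows from a short contradiction: for $U_1\in\F_1$, $U_2\in\F_2$, and $x\in(U_1\cap U_2)\setminus(S\cap T)$, the intersection constraints on $U_1,U_2$ force $x\notin S$ and $x\notin T$, whence $x\in U_1\setminus T$ and $x\in U_2\setminus S$ demand $\C(x)=1$ and $\C(x)=2$ simultaneously. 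The membership conditions $S\in\F_1,T\in\F_2$ reduce to the coloring constraints $\C(S\setminus T)\subseteq\{1\}$ and $\C(T\setminus S)\subseteq\{2\}$ on the disjoint parts $S\setminus T,T\setminus S$ of the symmetric difference.

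Granting this reduction, one must show that for all but few $(S,T)$, some compatible $\C$ produces $|\F_1|,|\F_2|\ge|\F|/n$. I would do this by averaging over a uniformly random $\C$ and performing an encoding argument: a bad triple $(S,T,\C)$ with $\C(S\setminus T)\subseteq\{1\}$ but $|\F_1|<|\F|/n$ admits an injective encoding $(T,\C|_{X\setminus T},I,j)$, where $I=S\cap T$ selects one of at most $2^{|T|}\le 2^{w}$ buckets of $\F_1$, and $j<|\F|/n$ indexes $S$ inside its bucket. Comparing the size of this encoding space with the count of compatible triples (weighted by $2^{-|S\setminus T|}$), grouping bad pairs by $|S\setminus T|\in\{0,1,\ldots,w\}$, and union-bounding over the symmetric $\F_2$-side failure produces the $2^{w}(2w+2)/n$ constant.

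The main obstacle is extracting the tight constants: a naive version of the encoding loses an extra $2^{w}$ factor, and recovering the optimal $2^{w}(w+1)$ per side demands (i) exploiting that $\F_1$ depends on $\C$ only off $T$, so the degrees of freedom on $T$ cancel between the encoding and the compatibility weight, and (ii) organizing the sum over $|S\setminus T|$ so it yields the $w+1$ factor linearly rather than exponentially. Verifying injectivity of the encoding, and ensuring that the side-1 and side-2 failures are not double-counted, is where the bulk of the technical care lies.
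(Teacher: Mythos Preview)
Your proposal is correct and takes essentially the same route as the paper: the paper's ``proof'' of \Cref{thm:2set} is literally the substitution $k=2$ into \Cref{thm:main}, which is your first paragraph, and your self-contained sketch is the $k=2$ specialization of the paper's mimicking/encoding machinery (Lemmas~\ref{lemma:color2inflate} and~\ref{lemma:colorup}). One small deviation worth flagging: the paper defines the mimicking family via $\C(U\setminus S)\subseteq\{1\}$ rather than your $\C(U\setminus T)\subseteq\{1\}$, which guarantees $S\in\F_1$ for \emph{every} coloring and lets the encoding (recolor $S$ to colour $1$, record $T$, $S\cap T$, and an index) be injective on all bad triples without any compatibility side-condition---this is exactly what produces the clean $(w+1)$ factor via the $w+1$ possible colour profiles of the recoloured $\C'$, and sidesteps the ``extra $2^w$'' obstacle you identify.
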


We make note of a particularly interesting consequence. First, recall the definition of a link. Given a set system $\F$ on $X$ and a set $U \subset X$, the \emph{link} of $\F$ at $U$ is
$\F_U = \{S \setminus U: S \in \F, U \subset S\}.$

We are now ready to state a corollary of \Cref{thm:2set}.

\begin{corollary}
\label{corollary:bipartite}
If $S, T$ are chosen randomly from $\F$, then with probability at least $1-\frac{4^w(2w+2)}{n}$, the link $\F_U$, where $U=S \cap T$, contains two cross-wise disjoint subfamilies $\F_1$, $\F_2$ each of size at least $|\F|/n$. \end{corollary}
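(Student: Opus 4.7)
The plan is to derive this almost immediately from \Cref{thm:2set}. That theorem guarantees that a uniformly random pair $(S,T) \in \F^2$ is $n$-inflatable except with probability at most $\frac{2^w(2w+2)}{n}$, so it suffices to verify that whenever $(S,T)$ is $n$-inflatable, the link $\F_U$ contains cross-wise disjoint subfamilies of the claimed size.

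Fix such an $n$-inflatable pair $(S,T)$ and write $U = S \cap T$. The definition of inflatability supplies subfamilies $\F_1 \ni S$ and $\F_2 \ni T$, each of size at least $|\F|/n$, satisfying $A_1 \cap A_2 = U$ for all $A_1 \in \F_1$ and $A_2 \in \F_2$. The key observation is that every set in $\F_1 \cup \F_2$ actually contains $U$: for any $A \in \F_2$ we have $S \cap A = U$, and since $U \subseteq S$ this forces $U \subseteq A$; the argument for $\F_1$ is symmetric via $T$. I would then pass to the link by setting $\F_i' = \{A \setminus U : A \in \F_i\} \subseteq \F_U$ for $i = 1, 2$. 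Because each such $A$ contains $U$, the map $A \mapsto A \setminus U$ is injective, so $|\F_i'| = |\F_i| \ge |\F|/n$.

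Cross-wise disjointness then drops out mechanically: for $A_1 \in \F_1$ and $A_2 \in \F_2$ we get $(A_1 \setminus U) \cap (A_2 \setminus U) = (A_1 \cap A_2) \setminus U = U \setminus U = \emptyset$. There is essentially no serious obstacle here; the one conceptual point worth flagging is the containment $U \subseteq A$ for every $A$ in the two families, since this is exactly what allows the passage to the link to preserve both the sizes and the cross-wise intersection structure.
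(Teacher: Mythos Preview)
Your proof is correct and follows essentially the same approach as the paper's: both start from the $n$-inflatability guaranteed by \Cref{thm:2set}, observe that every set in the two inflatability families contains $U$, pass to the link via $A \mapsto A \setminus U$, and verify cross-wise disjointness by the computation $(A_1 \setminus U) \cap (A_2 \setminus U) = (A_1 \cap A_2) \setminus U = \emptyset$. You are slightly more explicit than the paper in justifying $U \subseteq A$ and in noting the injectivity of $A \mapsto A \setminus U$ to preserve the size bound, but these are precisely the details implicit in the paper's ``we are done.''
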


One could for instance take $n=5^w$ so that the link of the intersection of almost any (as $w$ goes to infinity) pair of sets contains two cross-wise disjoint subfamilies of size $|\F|/5^w$.  Another consequence of \Cref{corollary:bipartite} is that in any induced subgraph of a Kneser graph $KG(m,w)$, a typical edge is contained in a large complete bipartite subgraph.  Recall that the Kneser graph $KG(m,w)$ has a vertex corresponding to each subset of $[m]$ of size $w$, and two vertices are adjacent if and only if their corresponding subsets are disjoint.

We state this explicitly as a theorem.

\begin{theorem}
\label{thm:kneser}
	Let $G=KG(m,w)$ be a Kneser graph, and let $G'$ be an induced subgraph.  Then for any $n$, and for all but at most $\frac{4^w(2w+2)}{n}|V(G')|^2$ edges $e$ of $G'$, there is some complete bipartite subgraph of $G'$ with $|V(G')|/n$ vertices on each side that contains $e$. \end{theorem}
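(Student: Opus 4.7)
The plan is to derive \Cref{thm:kneser} essentially as a restatement of \Cref{thm:2set} in graph-theoretic language. Let $\F = V(G')$, which is a $w$-uniform family of subsets of $[m]$ and in particular a $w$-set system in the sense of the paper. The edges of $G'$ are precisely the pairs $(S,T) \in \F^2$ with $S \cap T = \emptyset$, since $G'$ is induced in $KG(m,w)$. Applying \Cref{thm:2set} to $\F$, all but at most $\tfrac{2^w(2w+2)}{n}|\F|^2$ pairs $(S,T) \in \F^2$ are $n$-inflatable, and this bound certainly dominates the number of edges that fail to be $n$-inflatable.

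The main (and essentially only) step is to translate $n$-inflatability into the desired complete bipartite structure. Fix any edge $e = (S,T)$ of $G'$ that is $n$-inflatable. By definition there exist subfamilies $\F_1 \ni S$ and $\F_2 \ni T$ of $\F$, each of size at least $|\F|/n = |V(G')|/n$, such that $T_1 \cap T_2 = S \cap T = \emptyset$ for every $T_1 \in \F_1$ and $T_2 \in \F_2$. Each such pair $(T_1, T_2)$ is therefore an edge of $G'$, so $(\F_1, \F_2)$ spans a complete bipartite subgraph of $G'$ containing $e$.

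The only subtlety is to confirm that $\F_1$ and $\F_2$ are disjoint as vertex sets, so that we get a genuine bipartite subgraph rather than one with overlapping sides. If some $U$ lay in $\F_1 \cap \F_2$, then taking $T_1 = T_2 = U$ in the inflatability condition would give $U = U \cap U = \emptyset$, contradicting $|U| = w \ge 1$. Hence the two families are disjoint and we obtain the required complete bipartite subgraph with at least $|V(G')|/n$ vertices on each side. I do not anticipate any real obstacle here; the content of the theorem is fully captured by \Cref{thm:2set}, and the verification above is the entirety of the translation.
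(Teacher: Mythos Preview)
Your proof is correct and follows essentially the same approach as the paper: set $\F = V(G')$, apply \Cref{thm:2set}, and observe that $n$-inflatability of a disjoint pair $(S,T)$ yields the required complete bipartite subgraph. Your extra verification that $\F_1$ and $\F_2$ must be disjoint (since any common member would have to be empty) is a nice detail that the paper leaves implicit.
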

	
In particular, if we have a $w$-set system $\F$ with more than $\eps|\F|^2$ pairs of disjoint sets, then $n=\frac{1}{\eps}4^w(2w+2)$ in \Cref{thm:kneser} yields that there are some disjoint subfamilies $\F_1$ and $\F_2$ of size at least $\frac{\eps|\F|}{4^w(2w+2)}$.  In other words, any set system with many pairs of disjoint sets has a large pair of disjoint subfamilies.

For general $k$, we have a similar result for Kneser $k$-hypergraphs.  In other words, if many $k$-tuples of a set system are pairwise disjoint, we can find some disjoint subfamilies $\F_1, \cdots, \F_k$ which are large.

\subsection{Sunflower Blowups}
\label{sfs}

We will also use \Cref{thm:main} to extend the sunflower lemma of Erd\H{o}s and Rado \cite{ErdosR1960}.  Erd\H{o}s and Rado originally called sunflowers $\Delta$-systems, but Deza and Frankl \cite{deza1981every} coined the name ``sunflower". 

\begin{definition}
	For a fixed $k \ge 2$, a $k$-petal \emph{sunflower} is a $k$-tuple of sets $(S_1, \cdots, S_k)$ so that for any $1 \le i<j \le k$, $S_i \cap S_j=S_1 \cap \cdots \cap S_k$.  The $S_i \setminus (S_1 \cap \cdots \cap S_k)$ are called the \emph{petals}, and $S_1 \cap \cdots \cap S_k$ is called the \emph{kernel}. \end{definition}
	
The case $k=2$ is trivial, because any pair of sets form a $2$-petal sunflower, so typically we assume $k \ge 3$.  We will also prove a corollary.

\begin{corollary}
\label{corollary:sf}
	Let $f_k(w)$ be such that any $w$-set system with $f_k(w)$ sets contains a $k$-petal sunflower. Then, there exists some constant $D_k$ depending on $k$ so that if $\F$ is a $w$-set system on a ground set $X$, then almost all (as $w$ goes to infinity) of its $k$-petal sunflowers $(S_1, \cdots, S_k)$ are $n$-inflatable with parameter $n=f_k(w)^kD_k^w$.
\end{corollary}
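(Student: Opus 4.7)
The plan is to combine Theorem~\ref{thm:main} with an elementary count of the number of $k$-petal sunflowers in $\F$, and to choose $D_k$ so that the fraction of sunflowers that fail to be $n$-inflatable vanishes as $w\to\infty$.

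First I would lower bound the number of $k$-sunflowers in $\F$ by a sampling argument. Write $N=|\F|$, and assume $N\ge f_k(w)$ (otherwise there are no sunflowers and the statement is vacuous). By the definition of $f_k(w)$, every subfamily of $\F$ of size $f_k(w)$ contains at least one $k$-sunflower, and any fixed $k$-sunflower lies in exactly $\binom{N-k}{f_k(w)-k}$ such subfamilies. Double-counting incidences yields that the number of ordered $k$-sunflowers in $\F$ is at least
\[
\frac{N(N-1)\cdots(N-k+1)}{f_k(w)(f_k(w)-1)\cdots(f_k(w)-k+1)} \;\ge\; C_k\cdot\frac{N^k}{f_k(w)^k}
\]
for some constant $C_k>0$ depending only on $k$, provided, say, $N\ge 2f_k(w)$; the remaining range $f_k(w)\le N<2f_k(w)$ is handled directly.

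Next I would apply Theorem~\ref{thm:main} with $n=f_k(w)^k D_k^w$, bounding the total number of $k$-tuples in $\F^k$ that are not $n$-inflatable by $\binom{k+w-1}{k-1}\,k\,2^{w(k-1)}\,N^k/(f_k(w)^k D_k^w)$. Since every non-inflatable sunflower is in particular a non-inflatable $k$-tuple, dividing by the sunflower lower bound gives that the fraction of $k$-sunflowers that are not $n$-inflatable is at most
\[
\frac{1}{C_k}\binom{k+w-1}{k-1}\cdot\frac{k\,2^{w(k-1)}}{D_k^w}.
\]

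The main (and essentially only) point to check is that $D_k$ can be chosen so this ratio tends to $0$. Since $\binom{k+w-1}{k-1}$ is polynomial in $w$, any $D_k$ strictly greater than $2^{k-1}$ suffices; concretely, taking $D_k=2^k$ gives a bound of order $w^{k-1}/2^w$, which vanishes as $w\to\infty$. I expect no real obstacle here: the heavy lifting has been done in Theorem~\ref{thm:main}, and what remains is the elementary sunflower count and the observation that the exponential base $D_k^w$ beats the $2^{w(k-1)}$ factor coming from the main theorem.
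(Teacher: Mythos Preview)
Your proposal is correct and follows essentially the same route as the paper's proof: both lower-bound the number of $k$-sunflowers by the elementary double-counting argument giving $\gtrsim (|\F|/f_k(w))^k$, upper-bound the non-$n$-inflatable $k$-tuples via Theorem~\ref{thm:main}, and then choose $D_k$ just large enough to beat the $2^{w(k-1)}$ factor. The only cosmetic difference is that the paper absorbs $\binom{k+w-1}{k-1}\,k\,2^{w(k-1)}$ into a single constant $C_k^w$ and then sets $D_k=2C_k$, whereas you keep the binomial explicit and take $D_k=2^k$; these are equivalent.
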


The sunflower conjecture (\cite{ErdosR1960}) states that we may take $f_k(w)=(O_k(1))^w$. By using the modifications of the original argument of the author, Lovett, Wu, and Zhang \cite{alwz} due to Frankston, Kahn, Narayanan, and Park \cite{fknp}, Rao \cite{rao2019coding} proved that we may take $f_k(w)=(O_k(\log w))^w$. Thus, if the sunflower conjecture is true, there is some constant $C_k$ depending on $k$ so that if $\F$ is a $w$-set system, then for a typical $k$-petal sunflower $(S_1, \cdots, S_k)$, there exists subfamilies $\F_1, \cdots, \F_k$ of $\F$ of size at least $|\F|/C_k^w$ so that any $S_1 \in \F_1, \cdots, S_k \in \F_k$ form a sunflower with kernel $S_1 \cap \cdots \cap S_k$. Unconditionally, there is some constant $C_k$ depending on $k$ such that a typical $k$-petal sunflower can be blown up to yield families $\F_1, \cdots, \F_k$ of $\F$ of size at least $|\F|/(C_k(\log w)^k)^w$ so that any $S_1 \in \F_1, \cdots, S_k \in \F_k$ form a $k$-petal sunflower.
\section{Main Technical Proofs}
\label{s2}

We assume $w \ge 2$; $w=1$ is a trivial case. We can assume all sets are of size exactly $w$ by adding dummy elements to sets that are too small, since this does not affect inflatability of any $k$-tuple. We may also assume the ground set $X$ has size divisible by $k$, since we may add dummy elements to $X$ to make this the case. Thus, we may randomly partition $X$ into $k$ equal-sized parts $X_i$ for $1 \le i \le k$.  We say that the elements of $X_i$ have \emph{color} $i$ and we refer to the partition of $X$ into the $X_i$ as a \emph{coloring} $\C$ of the ground set $X$.  We call a coloring \emph{balanced} if there are the same number of elements of each of the $k$ colors.
	
\begin{definition}
Fix a $w$-set system $\F$ on a ground set $X$, a balanced coloring $\C$ of $X$, and a $k$-tuple $(S_1, \cdots, S_k)$ in $\F^k$. We say that $T \in \F$ \emph{mimics $S_j$ with respect to $(S_1, \cdots, S_k)$} if for all $i \neq j$, $S_i \cap T=S_i \cap S_j$, and $T \setminus S_j \subset X_j$.
\end{definition}

The last condition, that all elements of $T \setminus S_j$ are colored with the color $j$, at first glance appears to be rather restrictive.  It turns out that this is surprisingly not the case, in the sense that for a random $k$-tuple, many sets will mimic each $S_j$.

\begin{definition}
For a fixed $1 \le j \le k$, say that $(S_1, \cdots, S_k)$ is \emph{$(n,j)$-bad} (under a balanced coloring $\C$) if there are fewer than $|\F|/n$ sets $T \in \F$ such that $T$ mimics $S_j$ with respect to $(S_1, \cdots, S_k)$. Say $(S_1, \cdots, S_k)$ is \emph{$n$-bad} under $\C$ if it is $(n,j)$-bad under $\C$ for some $1 \le j \le k$. A $k$-tuple is \emph{$n$-good} under $\C$ if it is not $n$-bad under $\C$. \end{definition}

The following lemma shows the relationship between the good and inflatable properties:
\begin{lemma} If $(S_1, \cdots, S_k)$ is $n$-good under some coloring $\C$, then it is $n$-inflatable.
\label{lemma:color2inflate}	
\end{lemma}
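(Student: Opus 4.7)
The plan is to take the natural choice for the subfamilies: for each $1 \le j \le k$, define
\[ \F_j = \{T \in \F : T \text{ mimics } S_j \text{ with respect to } (S_1, \ldots, S_k)\}. \]
First I would verify that $S_j \in \F_j$: indeed, $S_i \cap S_j = S_i \cap S_j$ for all $i \neq j$, and $S_j \setminus S_j = \emptyset \subset X_j$ vacuously. The size bound $|\F_j| \ge |\F|/n$ is then immediate from $n$-goodness, since the $k$-tuple is in particular not $n$-$j$-bad.

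The substantive step is to check the intersection condition: for $i \neq j$ and any $T_i \in \F_i$, $T_j \in \F_j$, we need $T_i \cap T_j = S_i \cap S_j$. For the inclusion $S_i \cap S_j \subseteq T_i \cap T_j$, I would use that $T_i$ mimics $S_i$ with $l = j$ to get $S_j \cap T_i = S_j \cap S_i$, which forces $S_i \cap S_j \subseteq T_i$; symmetrically $S_i \cap S_j \subseteq T_j$.

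The reverse inclusion $T_i \cap T_j \subseteq S_i \cap S_j$ is where the coloring hypothesis does real work, and it is the step I expect to require the most care. I would argue by taking $x \in T_i \cap T_j$ and splitting into cases according to membership in $S_i$ and $S_j$. If $x \in S_i \setminus S_j$, then $x \in S_i \cap T_j = S_i \cap S_j$ (since $T_j$ mimics $S_j$), contradicting $x \notin S_j$; the case $x \in S_j \setminus S_i$ is symmetric. If $x \notin S_i \cup S_j$, then $x \in T_i \setminus S_i \subseteq X_i$ and $x \in T_j \setminus S_j \subseteq X_j$, which is impossible because the color classes $X_i$ and $X_j$ are disjoint for $i \neq j$. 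Thus the only surviving case is $x \in S_i \cap S_j$, completing the proof.

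The main potential obstacle is really just making sure the case analysis is exhaustive and that the two types of conditions in the definition of mimicking (the intersection condition $S_i \cap T = S_i \cap S_j$ and the color condition $T \setminus S_j \subseteq X_j$) are each doing exactly the job they need to: the intersection condition kills the ``asymmetric'' cases where $x$ lies in exactly one of $S_i, S_j$, while the color condition kills the case where $x$ lies in neither. No nontrivial counting or probabilistic argument is needed for this lemma — it is purely a structural consequence of the definitions.
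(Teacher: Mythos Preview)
Your proof is correct and is essentially the same as the paper's: you choose $\F_j$ to be the set of mimickers of $S_j$, read off the size bound from $n$-goodness, and then verify $T_i\cap T_j=S_i\cap S_j$ using the intersection condition for one inclusion and the disjointness of $X_i$ and $X_j$ for the other. The only cosmetic differences are that you explicitly check $S_j\in\F_j$ (which the paper leaves implicit) and organize the reverse inclusion as a three-way case split on membership in $S_i,S_j$, whereas the paper argues $T_i\cap T_j\subset S_i$ by contradiction and appeals to symmetry; these are the same argument.
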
 

Therefore, it suffices to prove the following lemma:	
\begin{lemma}
\label{lemma:colorup}
	Let $\F$ be a set system on a ground set $X$. For any $1 \le j \le k$, if $\C$ is a uniform balanced coloring of $X$ and $(S_1, \cdots, S_k)$ is sampled uniformly from $\F^k$, then $(S_1, \cdots, S_k)$ is $n$-bad with probability at most $\dbinom{k+w-1}{k-1}\frac{k^{w+1}2^{w(k-1)}}{n}$. \end{lemma}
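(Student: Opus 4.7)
By a union bound over the $k$ indices $j \in \{1, \ldots, k\}$, it suffices to show for each fixed $j$ that
$$\Pr_{\C, (S_1, \ldots, S_k)}\bigl[(S_1, \ldots, S_k) \text{ is } n\text{-}j\text{-bad under } \C\bigr] \le \binom{k+w-1}{k-1} \cdot \frac{2^{w(k-1)}}{n}.$$
Letting $N$ denote the number of balanced colorings of $X$, the total sample space has $N \cdot |\F|^k$ elements. My plan is to construct an encoding (injective map) $\Phi$ from the set of $n$-$j$-bad samples to a set of size at most $\binom{k+w-1}{k-1} \cdot 2^{w(k-1)}/n \cdot N \cdot |\F|^k$; the bound then follows by dividing by $N \cdot |\F|^k$.

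\textbf{The encoding.} To each bad sample $(\C, S_1, \ldots, S_k)$ I will associate the tuple
$$\Phi(\C, S_1, \ldots, S_k) \;=\; \bigl(\C,\; (S_i)_{i \neq j},\; T,\; A,\; p\bigr),$$
where $T$ is a canonical element of $M_j(S_j; \C, (S_i)_{i \neq j})$ (say the minimum under a fixed linear order on $\F$), $A = S_j \cap \bigcup_{i \neq j} S_i$, and $p = (|S_j \cap X_l|)_{l=1}^{k}$ is the color profile of $S_j$. The range size is bounded by $N \cdot |\F|^{k-1} \cdot (|\F|/n) \cdot 2^{w(k-1)} \cdot \binom{k+w-1}{k-1}$: the $|\F|/n$ factor uses the bad hypothesis $|M_j(S_j)| < |\F|/n$; the $2^{w(k-1)}$ factor counts subsets of $\bigcup_{i \neq j} S_i$ (a set of size at most $w(k-1)$); and the $\binom{k+w-1}{k-1}$ factor counts weak compositions of $w$ into $k$ parts---matching the claimed bound exactly.

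\textbf{Main obstacle: injectivity of $\Phi$.} The technical heart is showing $\Phi$ is injective, i.e., recovering $S_j$ from $(\C, (S_i)_{i \neq j}, T, A, p)$. From the encoded data we can read off: $S_j \cap \bigcup_{i \neq j} S_i = A$; the containment $S_j \supset T \setminus X_j$ (since $T$ mimics $S_j$); and the color profile $p$. Together these pin down $S_j$ on all but $|T \cap V_j|$ elements (where $V := X \setminus \bigcup_{i \neq j} S_i$), leaving the rest to be a subset of $V_j \cup (V \setminus T)$ with prescribed color counts. The color counts alone can admit several compatible subsets, so the critical step is to show that only one such $S_j$ actually lies in $\F$ and gives back the same canonical $T$. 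I expect to exploit the canonical choice of $T$ here: if two distinct $S_j, S_j' \in \F$ were both consistent with the encoding and each had $T$ as the minimum of its $M_j$-set, one should be able to derive a contradiction by exhibiting a smaller mimicker for one of them using a swap between $V_j$ and $V \setminus T$. Making this precise---perhaps by tailoring the linear order on $\F$ to respect the coloring and intersection structure---is the principal non-routine step; once settled, the counting from the previous paragraph closes the argument.
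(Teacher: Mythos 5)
Your high-level skeleton (injective encoding, union bound over $j$, a factor of $|\F|/n$ earned from the $n$-$j$-bad hypothesis) matches the paper's, and you correctly identify injectivity as the crux, but there is a genuine gap and the plan you sketch to close it does not work. First, the $|\F|/n$ factor is not honestly earned by your tuple: the $T$-coordinate of $\Phi(\C,S_1,\dots,S_k)=(\C,(S_i)_{i\ne j},T,A,p)$ ranges over $\F$, so a priori contributes a factor $|\F|$, not $|\F|/n$. The bad hypothesis bounds the size of $M_j(S_j)$ for the particular $S_j$ in hand, but that does not bound the set of possible $T$'s as the bad sample varies; and if you instead replace $T$ by its index inside $M_j(S_j)$, decoding requires already knowing $S_j$, which is circular. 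Second, the canonicity of $T$ cannot rescue injectivity: two distinct sets $S_j,S_j'\in\F$ with the same $A$, the same profile $p$, and both having $T$ as their $\F$-minimum mimicker would collide, and nothing prevents two sets from sharing a canonical mimicker --- no choice of linear order fixes this.

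What you are missing is the paper's central device: encode not $\C$ but $\C'$, the coloring obtained from $\C$ by recoloring every element of $S_j$ to color $j$. This does double duty. The number of possible $\C'$ is at most $\binom{k+w-1}{k-1}$ times the number of balanced colorings (the color-count vector of $\C'$ is determined by the weak composition $p$, and for each fixed count vector log-convexity bounds the number of colorings by the balanced multinomial). More importantly, from $\bigl(\C',(S_i)_{i\ne j},(S_i\cap S_j)_{i\ne j}\bigr)$ you can \emph{generate} a candidate list for $S_j$: all $T_j\in\F$ that are entirely color $j$ under $\C'$ and have $T_j\cap S_i=S_i\cap S_j$ for $i\ne j$. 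Since $\C'$ agrees with $\C$ outside $S_j$, any such $T_j$ has $T_j\setminus S_j\subset X_j$, so it mimics $S_j$ under $\C$, and the bad hypothesis caps this list at fewer than $|\F|/n$ entries. Encoding the position of $S_j$ in this list is where $|\F|/n$ legitimately appears, and it simultaneously resolves the recovery of $S_j$. Replacing your first coordinate $\C$ with $\C'$, dropping $T$, and adding this index is precisely what turns your sketch into the paper's proof.
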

	
The proof of \Cref{lemma:colorup} in \Cref{pfcolorup} follows from an encoding argument similar in spirit to that of \cite{alwz}.  For each fixed $1 \le j \le n$, this allows to explicitly bound the number of pairs of a balanced coloring $\C$ and a $k$-tuple $(S_1, \cdots, S_k)$ so that $(S_1, \cdots, S_k)$ is $(n,j)$-bad under $\C$.

\subsection{Proof of \Cref{lemma:color2inflate}}

	Take $\F_j$ to be the family of $T_j$ which mimic $S_j$ under the coloring $\C$.  Note $|\F_j| \ge |\F|/n$ for each $1 \le j \le k$ by assumption. It suffices to show that if $T_i \in \F_i$ and $T_j \in \F_j$ for $i \neq j$, then we have $T_i \cap T_j=S_i \cap S_j$. Clearly $T_i \supset T_i \cap S_j=S_i \cap S_j$, and similarly $T_j \supset T_j \cap S_i=S_i \cap S_j$.  Hence, $T_i \cap T_j \supset S_i \cap S_j$.  Now, we prove $T_i \cap T_j \subset S_i$.  If $x \in T_i \cap T_j$ is not in $S_i$, then since $T_i \setminus S_i \subset X_i$, we have $x \in X_i$.  Because $X_i$ and $X_j$ are disjoint, $x \notin X_j \supset T_j \setminus S_j$, and hence $x \notin T_j \setminus S_j$.  Since $x \in T_j$, we have $x \in S_j$.  But then $x \in T_i \cap S_j=S_i \cap S_j$, a contradiction.  Hence, $T_i \cap T_j \subset S_i$, and similarly $T_i \cap T_j \subset S_j$, so $T_i \cap T_j=S_i \cap S_j$.

\subsection{Proof of \Cref{lemma:colorup}}
\label{pfcolorup}
For a fixed $1 \le j \le k$, we will bound the number of pairs of balanced colorings $\C$ and $k$-tuples $(S_1, \cdots, S_k) \in \F^k$ so that $(S_1, \cdots, S_k)$ is $(n,j)$-bad under $\C$. We will do this by an ``encoding" argument; we can recover this pair from the following information.
\begin{enumerate}
\item 
\label{color} 
The first piece of information will be $\C'$, which is the coloring obtained by taking $\C$ and then recoloring the elements of $S_j$ with the color $j$. We claim that the number of possibilities for this is at most $\dbinom{|X|}{|X|/k, \cdots, |X|/k}\dbinom{k+w-1}{k-1}$.  This is because there are at most $\dbinom{k+w-1}{k-1}$ possibilities for the number of times that each of the $k$ colors appears as one of the $w$ elements of $S_j$, and by the log-convexity of the factorial function, at most $\dbinom{|X|}{|X|/k, \cdots, |X|/k}$ possibilities for $\C'$ once the number of elements of each color is fixed.
\item 
\label{othersets}
The second piece of information will be $S_i$ for each $i \neq j$.  There are $|\F|^{k-1}$ possibilities for this.
\item 
\label{cap}
The third piece of information will be $S_i \cap S_j$ for each $i \neq j$.  There are at most $2^{w(k-1)}$ possibilities for this, as for each of the $k-1$ sets $S_i$ with $i \neq j$ we specify one of $2^w$ possible subsets for $S_i \cap S_j$.
\item 
\label{index1}
The fourth piece of information specifies which of the mutually mimicking sets $T_j$ is $S_j$.  The sets $T_j$ so that $T_j \cap S_i=S_j \cap S_i$ for each $i \neq j$ and so that all elements $T_j$ have color $j$ in $\C'$ must have $T_j \setminus S_j \subset X_j$.  Hence, they must mimic $S_j$ with respect to $(S_1, \cdots, S_k)$, and so there are at most $|\F|/n$ possibilities for them by assumption.  We can thus identify $S_j$ by a positive integer at most $|\F|/n$.
\item
\label{index2}
The last piece of information specifies the coloring of the elements of $S_j$ in the original coloring.  There are at most $k^w$ possibilities, since $|S_j| \le w$ and each element of $S_j$ can be colored with one of $k$ colors.\footnote{Technically we could obtain the bound $\dbinom{w}{w/k, \cdots, w/k}$ here, omitting non-crucial floors and ceilings, but $k$ will generally be small compared to $w$, so we will not save much by doing that.  We use the estimate $k^w$ for ease of notation and readability.}
\end{enumerate}

Thus, of the $\dbinom{|X|}{|X|/k, \cdots, |X|/k}|\F|^k$ possible pairs of $\C$ and $(S_1, \cdots, S_k)$, the number so that $(S_1, \cdots, S_k)$ is $(n,j)$-bad under $\C$ is at most $$k^w\dbinom{|X|}{|X|/k, \cdots, |X|/k}|\F|^{k-1}\dbinom{k+w-1}{w-1}2^{w(k-1)}|\F|/n$$ $$= k^w\left(\dbinom{|X|}{|X|/k, \cdots, |X|/k}|\F|^k \right)\left(2^{w(k-1)}\dbinom{k+w-1}{w-1}\frac{1}{n}\right).$$

By a simple union bound over $k$ choices of $1 \le j \le k$, the number of possible pairs of $\C$ and $(S_1, \cdots, S_k)$ so that $(S_1, \cdots, S_k)$ is $n$-bad under $\C$ is at most $$\left(\dbinom{|X|}{|X|/k, \cdots, |X|/k}|\F|^k \right)\left(\dbinom{k+w-1}{w-1}k^{w+1}2^{w(k-1)}\frac{1}{n}\right).$$  This completes the proof of \Cref{lemma:colorup}.

\subsection{Proof of \Cref{thm:main}}

Using \Cref{lemma:colorup}, there exists a specific coloring $\C$ under which at most $$\dbinom{k+w-1}{k-1}k^{w+1}2^{w(k-1)}|\F|^k/n$$ of the $|\F|^k$ $k$-tuples $(S_1, \cdots, S_k)$ are $n$-bad.  In combination with \Cref{lemma:color2inflate}, this immediately implies \Cref{thm:main}.

\subsection{Proof of \Cref{corollary:bipartite}} If $(S, T)$ is $n$-inflatable, then there exists subfamilies $\F(S) \ni S$ and $\F(T) \ni T$ of $\F$ of size at least $|\F|/n$ such that $S' \cap T'=S \cap T=U$ for all $S' \in \F(S), T' \in \F(T)$.  Then for all $S' \in \F(S), T' \in \F(T)$, we have $U \subset S', U \subset T'$, and $(S' \setminus U) \cap (T' \setminus U)=(S' \cap T') \setminus U=U \setminus U=\emptyset$.  Thus if we take $\F_1=\{S' \setminus U: S' \in \F(S)\}$ and $\F_2=\{T' \setminus U: T' \in \F(T)\}$, we are done.

\subsection{Proof of \Cref{thm:kneser}}

Let $\F=V(G')$.  Then by \Cref{thm:2set}, all but at most $\frac{4^w(2w+2)}{n}|V(G')|^2$ pairs of sets of $\F$ are inflatable.  If $(S,T)$ is $n$-inflatable and $S,T$ are disjoint, then this means exactly that the edge $e$ between $S$ and $T$ in $G'$ is contained in a $K_{|V(G')|/n, |V(G')|/n}$ in $G'$.

\subsection{Proof of \Cref{corollary:sf}}
By \Cref{thm:main}, all but at most 

$$\dbinom{k+w-1}{k-1}\frac{k^{w+1}2^{w(k-1)}}{n}|\F|^k=k^{w+1}2^{w(k-1)}\frac{|\F|^k}{n}\prod_{i=1}^{w} \frac{k+i-1}{i} $$ $$\le k^w\frac{k^{w+1}2^{w(k-1)}}{n}|\F|^k \le \frac{k^wk^{2w}2^{wk}}{n}|\F|^k=\frac{(k^32^k)^w}{n}|\F|^k$$ 

of the $|\F|^k$-tuples $(S_1, \cdots, S_k) \in \F^k$ are $n$-inflatable.

In particular, there exists a constant $C_k=k^32^k$ so that all but at most $\frac{C_k^w}{n}|\F|^k$ of the $k$-tuples of sets from $\F$ are $n$-inflatable. Now, if any family of $f_k(w)$ sets contains a $k$-petal sunflower, then by a simple averaging argument, a $w$-set system $\F$ so that $|\F| \ge f_k(w)$ will have at least $\left(\frac{|\F|}{f(w)}\right)^k$ $k$-petal sunflowers. If $D_k=100C_k$, then if $n=f(w)^kD_k^w$,  almost all (as $w$ goes to infinity) $k$-petal sunflowers are inflatable.

\section{Multicolored Sunflowers}

We conclude this paper with a remark about multicolor sunflowers, which is of a somewhat similar flavor to the blown-up sunflowers that we consider in this paper.  Consider the \emph{multicolor} variant of the sunflower problem.  Say that for $1 \le i \le n$, there exists a triple of $(A_i,B_i,C_i)$ of sets of size $w$, so that $(A_i,B_j,C_k)$ form a sunflower if and only if $i=j=k$.  In terms of $w$, how big is the largest such $n$ for which this can exist?  If we have a sunflower-free family $\F$  with $n=|\F|$ sets $\{F_1, \cdots, F_n\}$, setting $A_i=B_i=C_i=F_i$ works, so such a family can be at least as large as the sunflower bound.

We mention that for the multicolor version of Roth's theorem, i.e. finding the largest number of triples $(a_i, b_i, c_i)$ in $[n]$ so that $a_i-2b_j+c_k=0$ if and only if $i=j=k$, the best bounds we are aware of are only from the triangle removal lemma.  Over $(\mathbb{Z}/3)^n$, by contrast, it is known \cite{kss} that the polynomial method gives the tight answer to the multicolor version of the capset problem.

In the case of the sunflowers, it turns out that the two problems are equivalent, up to an exponential factor in $w$.

\begin{theorem}
	Let $\F$ be a system of triples $(A_i, B_i, C_i)$ of sets of size at most $w$ so that $(A_i, B_j, C_k)$ form a sunflower if and only if $i=j=k$.  If any family of $f_3(w)$ sets of size at most $w$ contains a normal sunflower, then $|\F| \le (54e^3+o_w(1))^wf_3(w)$.
\end{theorem}

Essentially the same proof works for sunflowers with an arbitrary number of $k$ petals and a bound of $(2k^ke^k+o_w(1))^wf_k(w)$, but with slightly more cumbersome notation, so we just present the slightly more readable and less general proof, with $k=3$.  The constant $(54e^3)^w$ (or $(2(ek)^k)^w$) is surely not tight.  It would be interesting to find the best constant here, and also to find the best bounds in Theorem~\ref{thm:main}. We hope that all of this will shed some more light on the sunflower conjecture.

\begin{proof}

Say there is such a family $(A_i, B_i,C_i)$ of size $n$.  Color the ground set $X$ randomly, giving each element of $X$ a random uniform color from $[w]$.  Keep only the $(A_i,B_i,C_i)$ so that all of $A_i, B_i, C_i$ have exactly one element of each color.  In expectation and thus for some coloring, we keep at least a $(e^{-3}+o_w(1))^{-w}$ proportion of all such $(A_i, B_i, C_i)$ .  Pass to such a subset.  Now, let $S_i=A_i \cap B_i \cap C_i$.  There are at most $2^w$ choices for the colors that appear in $S_i$.  Losing at most a factor of $2^w$, we can pass to a sub-family formed by the subset of $i$ on which this set of colors is fixed.  This also guarantees that $|A_i \cap B_i \cap C_i|$ is independent of $i$.

Thus we have a subfamily where each of $A_i, B_i, C_i$ has one element of each color in $[w]$, and $S_i=A_i \cap B_i \cap C_i$ all have the same set of colors.  We call these the ``inner" colors of $[w]$ and the remaining colors the ``outer" colors of $[w]$.

Now, randomly color the ground set $X$ red, blue, and green (so all of the elements are colored by $[w] \times \{red, blue, green\}$). For each $(A_i,B_i,C_i)$ there is a probability at least $27^{-w}$ that $A_i\setminus S_i$ has all elements red, $B_i\setminus S_i$ has all elements blue, and $C_i\setminus S_i$ has all elements green.  So there is some coloring where we can pass to a sub-family where everything satisfies this property, losing a factor of at most $27^w$.

Finally, once we do this, we can use the original sunflower theorem.  In this new family, if $S_i, S_j, S_k$ form a sunflower, then $A_i, B_j, C_k$ are readily seen to also form a sunflower.  An element of $A_i \cap B_j$ that is colored by one of the inner colors will also be in $S_i$ and $S_j$ and thus also will be in $S_k$.  An element of $A_i \cap B_j$ that is one of the outer colors must be in both $A_i \setminus S_i$ and $B_j \setminus S_j$, but then it must be both red and blue, a contradiction.\end{proof}


%
%
%

\section{Acknowledgments}

We thank Noga Alon, Zach Chase, Peter Frankl, Noah Kravitz, Andrey Kupavskii, Shachar Lovett, Mihir Singhal, Kewen Wu, Jiapeng Zhang, and three anonymous reviewers for helpful comments.

\end{document}